\documentclass[oneside,11pt]{amsart}
\usepackage[utf8]{inputenc}
\usepackage{amssymb,graphicx,hyperref,microtype}
\usepackage[scale=0.7]{geometry}

\newtheorem{lemma}{Lemma}
\newtheorem{theorem}[lemma]{Theorem}

\newcommand{\cycdist}{\operatorname{dist}_{\mathrm{cyc}}}
\newcommand{\R}{\mathbb{R}}
\newcommand{\Z}{\mathbb{Z}}

\title{On convex spiral equicoverings of masses}

\author[E. Roldán-Pensado]{Edgardo Roldán-Pensado}
\address[E. Roldán-Pensado]{Centro de Ciencias Matemáticas, UNAM Campus Morelia, Morelia, Mexico}
\email{eroldan@matmor.unam.mx}

\keywords{Mass partitions, equicoverings, fans, degree, simplicial complexes}
\subjclass[2020]{52C15, 52A10, 55N10}

\begin{document}
	
	\begin{abstract}
		Convex spiral equicoverings were recently introduced by Espinosa-García, Martínez-Sandoval and Roldán-Pensado. They left open the question of whether every planar mass admits a convex $(3k,k+1)$-spiral equicovering. In this paper we give an affirmative answer to this question. To be precise, we prove the following: Given an integer $k \ge 2$, for every planar mass there is a fan consisting of $3k$ equal-mass sectors such that the union of every $k+1$ consecutive sectors is convex.
	\end{abstract}
	
	\maketitle
	
	\section{Introduction}
	
	A planar mass is a Borel probability measure $\mu$ on $\R^2$ such that $\mu(\ell)=0$ for every affine line $\ell$. Mass partition problems ask whether one or more masses can be divided into equal parts using regions with prescribed geometric properties. Classical examples include the ham sandwich theorem, which simultaneously bisects two planar masses with a line, and the theorem of Buck and Buck~\cite{BB49}, which partitions a planar convex body into six regions of equal area using three concurrent lines. For a survey of mass partition problems and their connections with topology, see \cite{RS22}.

    A natural family of partitions is given by fans. A $q$-fan consists of $q$ distinct rays emanating from a common point, and its regions are the sectors between consecutive rays. Simultaneous partitions of measures by fans have been studied by Bárány and Matou\v{s}ek \cite{BM01,BM02}. Among their results, every three planar masses can be simultaneously bisected by a $2$-fan, and every two planar masses can be simultaneously equipartitioned by a $4$-fan.
    
	In \cite{EMR26}, Espinosa-García, Martínez-Sandoval, and Roldán-Pensado introduced equicoverings. Here almost every point is covered with the same multiplicity and the covering regions have equal mass. Spiral equicoverings are obtained from fans by allowing consecutive sectors to overlap, as follows.
	
	Assume $1\le p<q$ and let $r_0,\dots,r_{q-1}$ be distinct rays emanating from a point $O$, listed counterclockwise, with indices taken modulo $q$. Write $V_j$ for the closed sector swept from $r_j$ to $r_{j+1}$ counterclockwise, and let
	\begin{equation}\label{eq:wedges}
		W_j=V_j\cup V_{j+1}\cup\dots\cup V_{j+p-1}.
	\end{equation}
	Every point outside the rays belongs to exactly one sector $V_j$ and to the $p$ wedges $W_{j-p+1},\dots,W_j$. Hence the family $\{W_j\}_{j=0}^{q-1}$ covers almost every point exactly $p$ times (see Figure \ref{fig:equicover}).

    \begin{figure}
        \centering
        \includegraphics{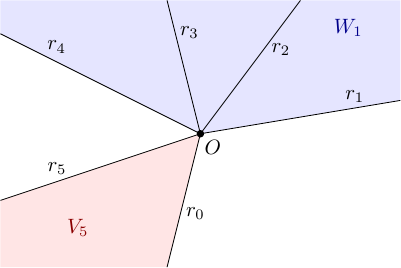}
        \caption{A spiral equicovering with $p=3$ and $q=6$. The sector $V_5$ and the wedge $W_1$ are highligted.}
        \label{fig:equicover}
    \end{figure}
	
	If $\mu(V_j)=1/q$ for every $j$, then $\mu(W_j)=p/q$ for every $j$. Such a family is called a \emph{spiral equicovering}. It is \emph{convex} when all $W_j$ are convex. Every $W_j$ has a counterclockwise angle $\alpha\in(0,2\pi)$ at $O$. Notice that $W_j$ is convex if and only if $\alpha\le\pi$.
	
	For coprime $p$ and $q$, the main theorem in \cite{EMR26} determines the values of $p$ and $q$ for which any mass admits a convex spiral equicovering, leaving open the case when $q=3p-3$ for even $p \ge 4$. Thus, the smallest open case is when $q=9$ and $p=4$ so that the $9$ wedges $W_i$ all have mass of $4/9$ and cover the plane $4$ times. Our main result settles these open cases.
	
	\begin{theorem}\label{thm:main}
		Let $k \ge 2$ be an integer and let $\mu$ be a planar mass. There exist $3k$ distinct counterclockwise rays from a common point $O$ such that every consecutive sector has mass $1/(3k)$ and every union of $k+1$ consecutive sectors has angle at most $\pi$. In other words, $\mu$ admits a convex $(3k,k+1)$-spiral equicovering.
	\end{theorem}

    When $k=2$, every union of three consecutive sectors and its complement both have angle at most $\pi$, so both have angle exactly $\pi$. Thus opposite rays form three concurrent lines, and we recover the six-sector equipartition of a planar mass.

    Our proof follows the configuration-space/test-map approach, with the obstruction expressed through the degree of a boundary map. Similar approaches are used in the fan partition results of \cite{BM01,BM02}, as well as in partitions by hyperplanes with fixed directions \cite{KRS16}. A general introduction to these methods is given in~\cite{Mat03}.
	
	The rest of the paper is devoted to Theorem \ref{thm:main}. We deal with the case when $\mu$ is a planar mass with a strictly positive continuous density, the result for general masses then follows from standard approximation arguments. Throughout the proof we set $n=3k$.
	
	\section{A map on the circle}
	
	Let $\mu$ be any planar mass with a strictly positive continuous density. Given a point $O$ and an angle $\theta$, there is a unique ray $r_0$ emanating from $O$ with direction $(\cos\theta,\sin\theta)$. Starting from this ray, the rays $r_1,\dots,r_{n-1}$ can be constructed in a unique way such that the corresponding sectors $V_0,\dots,V_{n-1}$ have mass $1/n$. Note that the wedges $W_j$, defined as in \eqref{eq:wedges}, have mass $(k+1)/n$.
	Write $\alpha_j\in(0,2\pi)$ for the angle of wedge $W_j$, then these angles depend continuously on the pair $(O,\theta)$.
	
	In order to test the convexity of the $W_j$, we define the function
	\[
	d:\R^2\times S^1\to[0,\pi)^n
	\]
	such that its $j$-th coordinate is given by
	\[d_j(O,\theta)=\max\{0,\alpha_j-\pi\}.\]
	The function $d_i$ is $0$ exactly when its corresponding $W_i$ is convex, therefore, finding a convex spiral equicovering is equivalent to finding a zero of $d$.
	
	Now we construct a simplicial complex $K$ whose vertices are the indices of the $W_i$, namely $\Z_n$. We want the faces of $K$ to contain all possible sets of wedges $W_i$ which are simultaneously non-convex.
	Note that if the cyclic distance $\cycdist(i,j)$ is greater than $k$ for given $i,j\in\Z_n$, then $W_i$ and $W_j$ have disjoint interiors. This implies that they cannot be both non-convex.
	
	Because of this, we define the faces of $K$ to be the subsets $I$ of $\Z_n$ such that $\cycdist(i,j)\le k$ for every $i,j\in I$.
	
	\begin{lemma}\label{lem:faces}
		The maximal faces of $K$ are either an interval of the form
		\[
		\lambda_j=\{j,j+1,\dots,j+k\}\quad\text{for } j\in\Z_n
		\]
		or a triangle of the form
		\[
		\tau_j=\{j,j+k,j+2k\}\quad\text{for } 0\le j<k.
		\]
	\end{lemma}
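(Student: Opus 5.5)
The plan is to show two things. First, every face of $K$ is contained in some $\lambda_j$ or some $\tau_j$. Second, these sets are themselves faces and none of them contains another. The tool is the \emph{gap structure} of a face. Given a nonempty $I\subseteq\Z_n$, list its elements counterclockwise and call the cyclic lengths between consecutive elements the gaps of $I$. These gaps sum to $n=3k$.

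\textbf{Step 1 (large gap).} Suppose some gap of $I$ has length $g\ge 2k$. Then $I$ lies in an arc of length $3k-g\le k$, so $I\subseteq\lambda_j$ for a suitable $j$.

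\textbf{Step 2 (no large gap).} Suppose every gap is $<2k$. Let $a,b$ be consecutive elements with gap $g$. Then $\cycdist(a,b)=\min(g,3k-g)$. Since $3k-g>k$, the face condition forces $g\le k$, so every gap is at most $k$.

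Now fix $a\in I$. Every element of $I$ is within cyclic distance $k$ of $a$, so the open arc strictly between $a+k$ and $a+2k$ contains no element of $I$. That arc has length $k$, so the gap covering it has length $\ge k$, and hence exactly $k$. This forces $a+k\in I$ and $a+2k\in I$. Therefore $I$ is closed under adding $k$, i.e. $I$ is a union of triangles $\{j,j+k,j+2k\}$.

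If $I$ contained two different such triangles, pick elements $j$ and $j'$ of them with $j'$ strictly inside the arc $(j,j+k)$. Then $j'+k\in I$ lies strictly inside $(j+k,j+2k)$, which contradicts the emptiness of that arc. So $I$ is a single triangle $\tau_j$, written with $0\le j<k$ as the representative of its residue class mod $k$.

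\textbf{Step 3 (the candidates are faces and are maximal).} Each $\lambda_j$ has diameter $k$, so it is a face. In $\tau_j$ all pairwise cyclic distances equal $k$, so it is also a face. No $\tau_j$ lies in some $\lambda_i$, because an arc of length $k$ cannot contain three points spaced $k$ apart. No $\lambda_j$ lies in a $\tau_i$: for $k\ge 3$ because $|\lambda_j|=k+1>3$, and for $k=2$ because $\lambda_j$ contains two consecutive residues while $\tau_i$ does not. Distinct $\lambda$'s are not nested since they have equal size, and the same holds for distinct $\tau$'s. Combined with Steps 1 and 2, this shows the maximal faces are exactly the $\lambda_j$ and the $\tau_j$.

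The only delicate point is the closure argument in Step 2, namely that each $a\in I$ forces $a+k,a+2k\in I$ once all gaps are $\le k$. The rest is bookkeeping.
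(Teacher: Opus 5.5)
Your argument is correct and is essentially the paper's. Both rest on the same dichotomy: either there is a gap of length at least $2k$, so that $I$ fits in some $\lambda_j$, or the gaps have length at most $k$, which forces $a+k,a+2k\in I$ and hence $I=\tau_j$. The paper anchors at $0\in I$ and examines only the single gap between the largest element of $[1,k]$ and the smallest of $[2k,3k-1]$, whereas you treat all gaps at once and pass through closure under $+k$. Your Step 3 also spells out the maximality check, including where $k\ge 2$ is needed, which the paper leaves as clear.
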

	\begin{proof}
		Assume we have a non-empty face. Since both types of faces are invariant under cyclic translations, we may assume that $0$ is one of its elements. Every other element must either be in the interval $[1,k]$ or $[2k,3k-1]$. If all of the elements are contained in only one of these two intervals then they are contained in $\lambda_0$ or $\lambda_{2k}$.
		If this is not the case, then let $a$ be the largest element in $[1,k]$ and $b$ be the smallest element in $[2k,3k-1]$. Because $a,b$ are in the same face, we must have $b-a\le k$ or $b-a\ge 2k$.
		
		The first case is only possible when $a=k$ and $b=2k$, furthermore, there cannot be another element in $[1,k]$ or its cyclic distance to $b$ would be too large. Likewise, there cannot be another element in $[2k,3k-1]$. We conclude that this face must be $\tau_0$.
		
		In the second case, the face is contained in $\lambda_b$.

        It is clear from the definition of $K$ that no additional vertex can be added to any $\lambda_j$ or $\tau_j$, hence these are the maximal faces of $K$.
	\end{proof}
	
	Let $L$ be the subcomplex of $K$ generated by the faces of the form $\lambda_j$. Denote by $\lvert L\rvert$ the geometric realization of $L$ so that the vertices of $\lvert L\rvert$ are $e_j$ for $j\in\Z_n$. Identify $S^1$ with the $z\in\mathbb C$ such that $\lvert z\rvert=1$. Then we may define the map
	\[
	Q:\lvert L\rvert \to S^1
	\]
	such that, for every face $I$ of $L$,
	\begin{equation}\label{eq:Q}
		Q\left(\sum_{j\in I} a_j e_j\right) = \frac{\sum_{j\in I} a_j \zeta^j}{\left\lvert\sum_{j\in I} a_j \zeta^j\right\rvert},
	\end{equation}
	where $\zeta = e^{2\pi i/n}$.
	Notice that for every face $I$ there is an arc of length $2\pi/3$ in $S^1$ containing all the $\zeta^j$ with $j\in I$. After rotating this arc so that it is centered at $1$, every $\zeta^j$ in the face has real part at least $1/2$. Thus, the denominator in \eqref{eq:Q} cannot vanish and $Q$ is continuous on $\lvert L\rvert$.
	
	Next, choose a large-enough $R>0$ so that the mass outside of the disk
	\[
	B(R)=\{x\in\R^2:\lvert x\rvert < R\}
	\]
	is smaller than $1/n$.
    
    \begin{lemma}\label{lem:distant}
		If $\lvert O\rvert = R$ then the indices $j$ such that $W_j$ is non-convex determine a non-empty face of $L$.
	\end{lemma}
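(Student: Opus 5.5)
Fix $O$ with $\lvert O\rvert=R$. We must show two things: at least one $W_j$ is non-convex, and the set of non-convex indices lies in some interval $\lambda_j$. The key tool is a line through $O$ that cuts off little mass. Let $\ell$ be the line through $O$ tangent to the circle $\partial B(R)$, i.e.\ perpendicular to the vector $O$. One closed half-plane $H$ bounded by $\ell$ lies outside the open disk $B(R)$. Therefore $\mu(H)<1/n$.

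\emph{Non-emptiness.} Every wedge $W_j$ has mass $(k+1)/n>1/n$, so no $W_j$ fits inside $H$. Suppose, for contradiction, that all $W_j$ were convex, i.e.\ every angle satisfies $\alpha_j\le\pi$. The open half-plane $H^\circ$, seen from $O$, is an open angular interval of length exactly $\pi$. It meets at most one ray boundary position in a way that we now control. The rays cut the directions around $O$ into sectors $V_j$, each of mass $1/n>0$. Since the density is positive, a sector contained in $H$ would have mass less than $\mu(H)<1/n$, which is impossible. Hence every $V_j$ sticks out of $H$, so some ray $r_j$ lies strictly inside the open half-plane $H^\circ$.

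Now combine the angles. The wedges containing $V_j$ for consecutive indices cover $2\pi$ with multiplicity $k+1$. A cleaner route is to look at the complementary half-plane $H'$, which has mass greater than $1-1/n$. Take the rays lying in $\overline{H'}$, and consider the set of sectors $V_j$ meeting $H^\circ$. These form a consecutive block covering the angular interval of $H$, and its total mass is at most $\mu(H)$ plus the masses of two boundary sectors. The real argument is then this. The complement of a wedge $W_j$ is a union of $2k-1$ sectors, of mass $(2k-1)/n$. If every $\alpha_j\le\pi$, then every complement has angle at least $\pi$. Take the $W_j$ whose two bounding rays are chosen so that its complement contains $H$. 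The total angle is $2\pi$, so an argument of this type should force some wedge of mass $(k+1)/n$ to lie within $H$ together with a sector, which is a contradiction. I expect this step to be the main obstacle. What I would try first: pick the ray $r_j$ closest in direction to $-O$, then show that $W_{j}$ or $W_{j-k-1}$ has angle greater than $\pi$. The reason is that the $k+1$ sectors on one side, plus the one sector crossing $\ell$, must reach past $\ell$ because only mass $<1/n$ lies beyond it.

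\emph{Face of $L$.} By the definition of $K$, the non-convex indices already form a face of $K$. So it suffices to exclude the triangles $\tau_j$: we must show that $W_j,W_{j+k},W_{j+2k}$ cannot all be non-convex. Suppose they were. These three wedges cover the plane, overlapping pairwise only in the single sectors $V_{j+k}$, $V_{j+2k}$ and $V_j$. Three angles each greater than $\pi$ sum to more than $3\pi$, so the three single overlap sectors have total angle greater than $\pi$. Their mass is only $3/n$, while the remaining $3k-3$ sectors have total angle less than $\pi$. These remaining sectors carry mass $1-3/n\ge 1-1/k$. All of them fit in an angle of less than $\pi$ at $O$, so they lie in an open half-plane through $O$. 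But every half-plane through $O$ other than $H$'s complement side contains... I would compare with $\ell$ here. Any half-plane through $O$ has mass at most $1-\mu(\text{opposite half-plane})$. Using convexity-type rotation, the opposite open half-plane at $O$ contains at least one of the three overlap sectors, which has mass $1/n$. I would aim to show that this opposite half-plane actually contains mass more than $3/n$. If this fails in general, I would instead use the triangle structure: each overlap sector has angle less than $\pi$, and at least two of them cross $\ell$. This forces mass greater than $1/n$ inside $H$, which contradicts $\mu(H)<1/n$.
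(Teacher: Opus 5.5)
There is a genuine gap: neither half of the lemma is actually proved. In the non-emptiness part you stop at what you yourself call the main obstacle. The one inference you do draw is backwards: from ``no sector $V_j$ lies in $H$'' you conclude that some ray lies in $H^\circ$. But the configuration in which no ray meets $H$ and a single sector contains all of $H$ also satisfies that hypothesis. Your proposed fallback fails as well. Take a rotationally symmetric mass, $k$ even, and $r_0$ pointing directly away from the origin. Then $r_{3k/2}$ points exactly at the origin, and $W_{3k/2}$ and $W_{k/2-1}$ each lie in a closed half-plane bounded by the line through $O$ and the origin, so both are convex. In the face part, the step ``the remaining $3k-3$ sectors have total angle less than $\pi$, hence lie in an open half-plane through $O$'' is false. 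Those sectors form three separate blocks interleaved with $V_j,V_{j+k},V_{j+2k}$, and they need not fit in any half-plane. The subsequent claim that two overlap sectors must cross $\ell$ is unsupported.

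The missing idea is the correct consequence of your own observation. Since each sector has mass $1/n>\mu(H)$, no sector lies in $H$. Hence at most one ray of the fan lies in $H$, because two rays in $H$ would enclose, along the arc of directions of $H$, a whole sector inside $H$. It follows that the sectors meeting $H^\circ$ form a block of at most two consecutive sectors. After a cyclic relabeling (harmless, since $K$ and $L$ are shift-invariant) these are among $V_{n-1},V_0$. The wedges $W_1,\dots,W_{2k-2}$ avoid this block, so they lie in the opposite closed half-plane and are convex. Every $\tau_j$ meets $\{1,\dots,2k-2\}$, because three indices pairwise at cyclic distance $k$ cannot fit in the complementary cyclic interval of $k+2$ indices. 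So by Lemma~\ref{lem:faces} the non-convex set is a face of $L$, and no angle-sum argument on the triangle is needed. For non-emptiness, $V_{n-1}\cup V_0\supseteq H^\circ$ has angle at least $\pi$. Equality would force $V_{n-1}\cup V_0=H$, i.e.\ $\mu(H)=2/n$, which is impossible. Hence the angle exceeds $\pi$, and $W_{n-1}\supseteq V_{n-1}\cup V_0$ is non-convex.
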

	\begin{proof}
		Define the opposite closed half-planes
        \begin{align*}
        H^+&=\{x\in\R^2:\langle x,O\rangle\ge R^2\},\\
        H^-&=\{x\in\R^2:\langle x,O\rangle\le R^2\}.
        \end{align*}
        Since $\mu(H^+)<1/n$, at most one ray of the fan is contained in $H^+$. Therefore, after a relabeling, we may assume that the rays $r_1,\dots,r_{n-1}$ are in $H^-$ (see Figure \ref{fig:distant}).
        
        The wedges $W_1,\dots,W_{2k-2}$ are also contained in $H^-$ and are therefore convex. Note that every $\tau_j$ intersects the interval $\{1,\dots,2k-2\}$ so by Lemma \ref{lem:faces} the indices $j$ such that $W_j$ is non-convex are contained in some $\lambda_l$ and therefore determine a face of $L$.
        
        This face is non-empty, since the union of $V_{n_1}$ and $V_1$ is already non-convex, so any wedge $W_j$ containing them is non-convex.
	\end{proof}

    \begin{figure}
        \centering
        \includegraphics{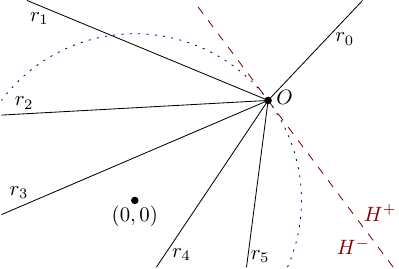}
        \caption{A point $O$ at distance $R$ from the origin and $6$ rays emanating from it, as in the proof from Lemma \ref{lem:distant}.}
        \label{fig:distant}
    \end{figure}
    
    Let
	\[
	M = \overline{B(R)}\times S^1 \quad \text{ and } \quad B = \partial B(R) \times S^1.
	\]
	Lemma \ref{lem:distant} implies that $d$ has at least one positive coordinate at every point in $B$, so we may define a function $f_\mu:B\to\lvert L\rvert$ given by
	\begin{equation}\label{eq:f}
		f_\mu(x)=\sum_{j=0}^{n-1} \frac{d_j(x)}{\sum_{l=0}^{n-1}d_l(x)}e_j.
	\end{equation}

    Assume, for contradiction, that $\mu$ admits no convex $(3k,k+1)$-spiral equicovering. Then $d$ does not vanish anywhere on $M$, so we may define a continuous map of pairs $F_\mu:(M,B)\to(\lvert K\rvert,\lvert L\rvert)$ using the same formula
    \begin{equation}\label{eq:F}
		F_\mu(x)=\sum_{j=0}^{n-1} \frac{d_j(x)}{\sum_{l=0}^{n-1}d_l(x)}e_j.
	\end{equation}
	
	Lastly, for a fixed $\theta_0$, parametrize the boundary of the disk $\overline{B(R)}\times\{\theta_0\}$ counterclockwise by a function
	\begin{align*}
		c:S^1 & \to \partial B(R)\times\{\theta_0\}\\
		e^{i \theta} & \mapsto (R(\cos(\theta),\sin(\theta)),\theta_0)
	\end{align*}
	Then our desired function is the composition
	\[
	S^1 \xrightarrow{c} \partial B(R)\times\{\theta_0\} \xrightarrow{f_\mu} \lvert L\rvert \xrightarrow{Q} S^1.
	\]

    The map $Q\circ f_\mu\circ c$ has a simple informal interpretation. For a given fan, the nonzero coordinates of $f_\mu$ correspond to its non-convex wedges, and their values measure how much their angles exceed $\pi$. The map $Q$ places the indices of these wedges around a circle and takes their normalized weighted average. Thus, $Q\circ f_\mu\circ c$ may be thought of as pointing towards the part of the cyclic ordering where the non-convex wedges occur.

    The following two sections compute the degree of this map in different ways. Assuming that
    In Section~\ref{sec:deg1}, the extension $F_\mu$, together with the cyclic symmetry of the construction, implies that the degree of $Q\circ f_\mu\circ c$ is divisible by $k$.

    On the other hand, we expect the non-convex wedges of a fan whose center is far from most of the mass to occur roughly in the direction pointing away from the mass. When moving the center around the origin, this direction also travels once around it. In Section \ref{sec:deg2} we make this precise by deforming the mass to one with radial symmetry. The resulting computation shows that the of degree of $Q\circ f_\mu\circ c$ is congruent to $1$ modulo $3k$.
	
	\section{The degree computed via a cyclic shift}\label{sec:deg1}
	
	Here we list some of the topological properties of the maps and spaces defined in the previous section in order to compute the degree of our function. For the sake of contradiction, we assume that Theorem \ref{thm:main} is false. This allows us to take advantage of the existence of $F_\mu$.
	
	Let's start with the space $\R^2\times S^1$. There is a natural order-$n$ homeomorphism $T$ on this space that fixes the point $O$ and increases the initial angle $\theta$ so that the initial ray $r_0$ becomes the next ray $r_1$ of the fan. This acts on $d$ by
	\begin{equation}\label{eq:T}
		d_j\circ T(O,\theta)=d_{j+1}(O,\theta).    
	\end{equation}
	Note that $T$ is isotopic to the identity, since the angle from $r_0$ to $r_1$ can be varied continuously in $(0,2\pi)$ while leaving $O$ fixed.
	
	Now consider the simplicial complex $K$ and its subcomplex $L$. By Lemma \ref{lem:faces}, the only simplices of $K$ not contained in $L$ are the triangles $\tau_j$, while every proper face of each $\tau_j$ belongs to $L$. Therefore, by giving $\tau_j=[j,j+k,j+2k]$ the indicated orientation,
	\[
	C_3(K,L;\mathbb Z) = 0, \quad
	C_2(K,L;\mathbb Z) = \bigoplus_{j=0}^{k-1}\mathbb Z[\tau_j], \quad
	C_1(K,L;\mathbb Z)=0.
	\]
	Thus both boundary homomorphisms adjacent to $C_2(K,L;\mathbb Z)$ vanish, and consequently
	\[
	H_2(K,L;\mathbb Z) = \bigoplus_{j=0}^{k-1}\mathbb Z[\tau_j].
	\]
	
	Moreover, the cyclic shift
	\begin{align*}
		P:K & \to K\\
		i & \mapsto i+1
	\end{align*}
	preserves $L$ and satisfies $P_*[\tau_j]=[\tau_{j+1}]$, where the indices are taken modulo $k$. Note also that composing $Q$ with $P$ gives
	\begin{equation}\label{eq:QP}
	    Q(Px) = \zeta Q(x),
	\end{equation}
	where $\zeta = e^{2\pi i/n}$.
	
	The boundary of $\tau_j$ is the cycle traversing the vertices $j,j+k,j+2k$ in this order, which is composed of three edges. Under $Q$, each edge is sent to the corresponding counterclockwise arc of angular length $2\pi/3$, so these wind once around the $S^1$. Therefore, for the composition
	\[
	H_2(K,L;\Z) \xrightarrow{\delta} H_1(L;\Z) \xrightarrow{Q_*} H_1(S^1;\Z) \cong \Z,
	\]
	where we identify $H_1(S^1;\mathbb Z)$ with $\mathbb Z$ using the counterclockwise orientation, we have
	\begin{equation}\label{eq:Q*delta}
		\sum_{j=0}^{k-1} a_j [\tau_j]\mapsto\sum_{j= 0}^{k-1} a_j.
	\end{equation}
	
	Note that applying $T$ from \eqref{eq:T} in $M$ corresponds to a shift in $\lvert K\rvert$, so
	\begin{equation}\label{eq:FT}
		F_\mu\circ T=P^{-1}\circ F_\mu.
	\end{equation}
	
	Let $m\in H_2(M,B;\Z)$ be the class of the oriented disk $\overline{B(R)}\times\{\theta_0\}$ for some initial direction $\theta_0$. Note that $T$ is isotopic to the identity as a map of pairs, so $T_*$ fixes $m$. Thus
	\[
	P_*^{-1}(F_\mu)_*m = (F_\mu)_*T_*m = (F_\mu)_*m.
	\]
	Because $P^{-1}_*$ cyclically permutes the basis, we have 
	\[
	(F_\mu)_*(m) = a\sum_{j=0}^{k-1} [\tau_j]
	\]
	for some integer $a$.
	Let
	\[
	\delta_M: H_2(M,B;\mathbb Z)\to H_1(B;\mathbb Z)
	\]
	and
	\[
	\delta_K: H_2(K,L;\mathbb Z)\to H_1(L;\mathbb Z)
	\]
	denote the corresponding connecting homomorphisms. Then, by naturality of the connecting homomorphisms,
	\[
	(F_\mu|_B)_*(\delta_Mm)
	=\delta_K((F_\mu)_*m).
	\]
	Since $\delta_Mm=c_*[S^1]$, we obtain
	\begin{align*}
		\deg(Q\circ F_\mu\circ c)
		&=Q_*(F_\mu|_B)_*(\delta_Mm)\\
		&=Q_*\delta_K((F_\mu)_*m)\\
		&=ka.
	\end{align*}
	
	\section{The degree computed via a measure with radial symmetry}\label{sec:deg2}
	
	We now compute the degree of the same function by deforming $\mu$ into a measure with radial symmetry. Let $\rho$ be a planar mass with a strictly positive continuous density, invariant under rotations about the origin, such that
	\[
	\rho(\R^2\setminus B(R))<1/n.
	\]
	For example, we may take a Gaussian measure concentrated around the origin. For $0\le s\le 1$, let
	\[
	\mu_s = (1-s)\mu+s\rho.
	\]
	Every $\mu_s$ has a strictly positive continuous density and satisfies
	\[
	\mu_s(\R^2\setminus B(R))<1/n.
	\]
	Then, using the formula in \eqref{eq:f}, we have a continuous family of maps
	\[
	f_{\mu_s}|_B:B\to |L|.
	\]
	Here continuity follows from the continuous dependence of the equal-mass rays on the center, the initial direction, and $s$. It follows that
	\[
	\deg(Q\circ F_\mu\circ c) = \deg(Q\circ f_\rho\circ c).
	\]
	
	To compute the latter degree, we fix $O_0=(R,0)$ and vary the initial direction. Define
    \begin{align*}
        h:S^1 & \to S^1\\
        e^{i\theta} & \mapsto Q(f_\rho(O_0,\theta))
    \end{align*}
	We use the angular mass around $O_0$ to parametrize the initial direction. For $0\le\theta<2\pi$, let $A(\theta)$ be the $\rho$-mass of the sector swept counterclockwise from direction $0$ to direction $\theta$, and extend $A$ to $\R$ by
	\[
	A(\theta+2\pi) = A(\theta)+1.
	\]
	This is a continuous strictly increasing function, so it has an inverse and $A^{-1}(t+1)=A^{-1}(t)+2\pi$. In this parametrization, advancing to the next ray adds $1/n$ to $t$.
	
	Define
	\[
	g(e^{2\pi it})=h(e^{iA^{-1}(t)}).
	\]
	The change of parametrization preserves orientation, so $\deg g=\deg h$. By \eqref{eq:FT} and \eqref{eq:QP}, we have
	\[
	g(e^{2\pi i(t+1/n)}) = e^{-2\pi i/n}g(e^{2\pi it}).
	\]
	Choose a continuous lift $u:\R\to\R$ such that
	\[
	g(e^{2\pi it})=e^{2\pi iu(t)}.
	\]
	Then
	\[
	u(t+1/n)-u(t)+1/n
	\]
	is an integer that depends continuously on $t$, and is therefore equal to some constant $b\in\Z$. Applying this identity $n$ times gives
	\[
	\deg h=\deg g=u(t+1)-u(t)=nb-1.
	\]

    \begin{figure}
        \centering
        \includegraphics{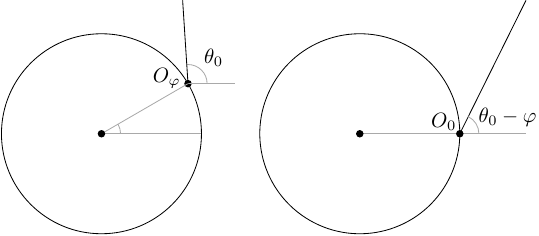}
        \caption{The way in which $O_\varphi$ and $\theta_0$ change after a rotation of $-\varphi$.}
        \label{fig:rotation}
    \end{figure}

    Finally, fix $\varphi$ and consider the fan with common vertex $O_\varphi=R(\cos\varphi,\sin\varphi)$ and initial direction $\theta_0$. Rotate the plane clockwise through angle $\varphi$ about the origin. This sends $O_\varphi$ to $O_0=(R,0)$, changes the initial direction to $\theta_0-\varphi$ but leaves $\rho$ invariant (see Figure \ref{fig:rotation}). Therefore the rotated fan is the equal-mass fan associated with $(O_0,\theta_0-\varphi)$. This rotation preserves the labels of the rays and the angles of the wedges, so
    \[
    f_\rho(O_\varphi,\theta_0)
     =f_\rho(O_0,\theta_0-\varphi).
    \]
    Therefore,
	\[
	(Q\circ f_\rho\circ c)(e^{i\varphi}) = h(e^{i(\theta_0-\varphi)}).
	\]
	Since $\varphi\mapsto\theta_0-\varphi$ reverses orientation,
	\[
	\deg(Q\circ F_\mu\circ c) = \deg(Q\circ f_\rho\circ c) = -\deg h = 1-nb.
	\]
	The previous section shows that this degree is divisible by $k$.
	Since $n=3k$, we obtain
	\[
	ka = 1-3kb,
	\]
	which is impossible. This proves Theorem~\ref{thm:main}.
	
	\section{Acknowledgments}
	This work was supported by UNAM-PAPIIT project IN114726. The author used OpenAI's ChatGPT in developing the proof and in drafting and revising the manuscript, especially in formalizing the topological arguments. The author verified every claim in the final proof and takes responsibility for the mathematical arguments.

\end{document}